\newtheorem{thm}{Theorem}[section]
\newtheorem{pro}[thm]{Proposition}
\newtheorem{lem}[thm]{Lemma}
\newtheorem{cor}[thm]{Corollary}
\def\leukfrac#1/#2{\leavevmode
               \kern.1em
                \raise.9ex\hbox{\the\scriptfont0 ${}_#1$}
                \hskip -1pt\kern-.1em
                /\kern-.15em\lower.10ex\hbox{\the\scriptfont0 ${}_#2$}}
\theoremstyle{definition}
\newtheorem{re}[thm]{Remark}
\theoremstyle{remark}
\newtheorem{claim}{Claim}
\def\Int{\mathop{\operator@font Int}\nolimits}
\begin{document}

\title[Separation of homogeneous connected locally compact spaces]
{Separation of homogeneous connected locally compact spaces}

\author{Vesko  Valov}
\address{Department of Computer Science and Mathematics, Nipissing University,
100 College Drive, P.O. Box 5002, North Bay, ON, P1B 8L7, Canada}
\email{veskov@nipissingu.ca}
\thanks{The author was partially by NSERC Grant 261914-19}

\keywords{acyclic partitions, cohomological dimension, homogeneous spaces}

\subjclass{Primary 55M10; Secondary 54F45}

%%%%%%%%%% End topmatter %%%%%%%%%%%%%%%%%%%%%

\begin{abstract}
We prove that any region $\Gamma$ in a homogeneous $n$-dimensional and locally compact separable metric space $X$, where $n\geq 2$, cannot be irreducibly separated by a closed $(n-1)$-dimensional
subset $C$ with the following property: $C$ is acyclic in dimension $n-1$ and there is a point $b\in C\cap\Gamma$ having a special local base $\mathcal B_C^b$ in $C$ such that the boundary of each $U\in\mathcal B_C^b$ is acyclic in dimension $n-2$. In case $X$ is strongly locally homogeneous, it suffices to have a point $b\in C\cap\Gamma$ with an ordinary base $\mathcal B_C^b$ satisfying the above condition.
The acyclicity means triviality of the corresponding \v{C}ech cohomology groups. This implies all known results concerning the separation of regions in homogeneous connected locally compact spaces.
\end{abstract}

\maketitle

\markboth{}{Separation of homogeneous continua}

%%%%%%%%%%%%%%%%%%%%%%%%%%%%%%%%%%%%%%%%%%%%%%%%%%%%%%%%%%%%%%%%
%%%%%%%%%%%%%%%%%%%%%%%%%%%%%%%%%%%%%%%%%%%%%%%%%%%%%%%%%%%%%%%%

%%%%%%%%%%%%%%%%%%%%%%%%%%%%%%%%%%%%%%%%%%%%%%%%%%%%%%%%%%%%%%%%%%%%%%

\section{Introduction}
By a \emph{space} we mean a locally compact separable metric space, and {\em maps} are continuous mappings. We also consider reduced in dimension zero \v{C}ech cohomology groups $H^n(X;G)$ with coefficients from an Abelian group $G$. If $G$ is the group of the integers $\mathbb Z$, we simply write $H^n(X)$.
Recall that a space $X$ is {\em separated by a set} $C\subset X$ if $C$ is closed in $X$ and $X\backslash C$ is the union of two disjoint open subsets $G_1, G_2$ of $X$. When $C$ is the intersection of the closures $\overline G_1$ and $\overline G_2$, we say that $C$ is an {\em irreducible separator}. A {\em partition} between two disjoint closed sets $A,B$ in $X$ is a closed set $P$ such that $X\backslash P$ is the union of two open disjoint sets $U,V$ in $X$ such that $A\subset U$ and $B\subset V$. 
In such a case we say that $P$ separates $X$ between $A$ and $B$, or $A$ and $B$ are separated in $X$ by $P$. 
A {\em region in $X$} is an open connected subset of $X$.
By $\dim X$ we denote the covering dimension of $X$, and $\dim_GX$ stands for the cohomological dimension of $X$ with respect to a group $G$. 
The boundary of a given set $U\subset X$ in $X$ is denoted by $\rm{bd}U$; if $U\subset C\subset X$, then $\rm{bd}_CU$ denotes the boundary of $U$ in $C$. We say that a point $x\in X$ has a {\em special base} $\mathcal B_x$ if for any neighborhoods $U,V$ of $x$ in $X$ with $\overline U\subset V$ there is
$W\in\mathcal B_x$ such that $\rm{bd}W$ separates $\overline V\backslash U$ between $\rm{bd}\overline V$ and $\rm{bd}\overline U$. 

One of the first results concerning the separation of homogenous metric spaces is the celebrated theorem of Krupski \cite{kru1}, \cite{kru} stating that every region in an $n$-dimensional homogeneous space cannot be separated by a subset of dimension $\leq n-2$. Kallipoliti and Papasoglu \cite{kp} established that any locally connected, simply connected, homogeneous metric continuum cannot be separated by arcs (according to Krupski's theorem, mentioned above, the Kallipoliti-Papasoglu result is interesting for spaces of dimension two). van Mill and the author \cite{vmv} proved that the  Kallipoliti-Papasoglu theorem remains true without simply connectedness, but requiring strong local homogeneity instead of homogeneity.
%Recently, Karassev-Valov \cite{kv} settle the two-dimensional case by proving that any region of a homogeneous space cannot be separated by an arc.
Recall that a space $X$ is {\em strongly locally homogeneous} if every point $x\in X$ has a base of open neighborhoods $U$ such that for every $y,z\in U$ there is a homeomorphism $h$ on $X$ with $h(y)=z$ and $h$ is the identity on $X\setminus U$. We say that such a homeomorphism $h$ is {\em supported by $U$}.
If for every $x,y\in X$ there is a homeomorphism $h$ on $X$ with $h(x)=y$, the spaces $X$ is {\em homogeneous}. 

In the present paper we establish the following theorem which captures all mentioned above results:
\begin{thm}
Let $\Gamma$ be a region in a homogeneous space $X$ with $dim X=n\geq 2$. Then $\Gamma$ cannot be irreducibly separated by any closed set $C\subset X$ with the following property:
\begin{itemize}
\item[(i)] $\dim C\leq n-1$ and $H^{n-1}(C)=0$;
\item[(ii)] There is a point $b\in C\cap\Gamma$ having a special base $\mathcal B_C^b$ in $C$ with $H^{n-2}(\rm{bd}_CU)=0$ for every $U\in\mathcal B_C^b$.
\end{itemize}
If $X$ is strongly locally homogeneous, condition $(ii)$ can be weakened to the following one:
\begin{itemize}
\item[(iii)]
There is $b\in C\cap\Gamma$ having an ordinary base $\mathcal B_C^b$ in $C$ with $H^{n-2}(\rm{bd}_CU)=0$, $U\in\mathcal B_C^b$.
\end{itemize}
\end{thm}

\begin{re}
According to \cite[Corollary 1.6]{vv}, if $X$ in Theorem 1.1 is a compactum with $H^n(X)\neq 0$, then $X$ is not separated by any $C$ satisfying condition $(i)$.

Since $H^{k+1}(Y)=0$ for
any $k$-dimensional space $Y$, we have the following fact: If $\dim Y\leq n-2$, then $H^{n-1}(Y)=0$ and every $x\in Y$ has a special base of neighborhoods $U$ with $H^{n-2}(\rm{bd}U)=0$ because every two closed subsets of $Y$ can be separated by set $A$ with $\dim A\leq n-3$. Moreover, any $(n-2)$-dimensional separator contains a closed subset which is an irreducible $(n-2)$-dimensional separator. Therefore, Theorem 1.1 implies directly that any region in a homogeneous $n$-dimensional space cannot be separated by a subset of dimension $\leq n-2$. Similar arguments show that if $G$ is any countable Abelian group, then any homogeneous connected space of cohomological dimension $\dim_GX\leq n$ cannot be separated by a closed subset of dimension $\dim_G\leq n-2$ (this fact was established by different methods in \cite{kktv}).

If a region $\Gamma$ in a two-dimensional strongly locally homogeneous space is separated by an arc $C$, then there is a closed $C'\subset C$ irreducibly separating $\Gamma$, see \cite{vmv}. Then $H^1(C')=0$ and the point $b=\max\{x: x\in C'\}$ has a base $\mathcal B_{C'}$ such that $\rm{bd}_{C'}U$ is a point for all $U\in\mathcal B_{C'}$. Therefore, Theorem 1.1 also implies our result \cite{vmv} with van Mill.
\end{re}

Theorem 1.1 is a particular case of the following theorem when $G=\mathbb Z$:
\begin{thm}
Let $\Gamma$ be a region in a finite-dimensional homogeneous space $X$ with $\dim_GX=n\geq 2$, where $G$ is a countable Abelian group. Then $\Gamma$ cannot be irreducibly separated by any closed set $C\subset X$ with the following property:
\begin{itemize}
\item[(i)] $\dim_G C\leq n-1$ and $H^{n-1}(C;G)=0$;
\item[(ii)] There is a point $b\in C\cap\Gamma$ having a special local base $\mathcal B_C^b$ in $C$ with $H^{n-2}(\rm{bd}_CU;G)=0$ 
for every $U\in\mathcal B_C^b$.
\end{itemize}
If $X$ is strongly locally homogeneous, the finite-dimensionality of $X$ can be omitted and condition $(ii)$ can be weakened to the following one:
\begin{itemize}
\item[(iii)]
There is $b\in C\cap\Gamma$ having an ordinary base $\mathcal B_C^b$ in $C$ with $H^{n-2}(\rm{bd}_CU)=0$, $U\in\mathcal B_C^b$.
\end{itemize}
\end{thm}
Theorem 1.3 is established in Section 3. Section 2 contains some definitions and preliminary results. 
%In the final Section 4 we present an analogue of Mazurkiewicz' theorem \cite{ma} that any $(n-2)$-dimensional subset of $\mathbb R^n$ cannot cut $\mathbb R^n$.

\section{Definitions and preliminary results}
 Recall that for any nontrivial Abelian group $G$ the \v{C}ech cohomology group $H^n(X;G)$ is isomorphic to the group $[X,K(G,n)]$ of pointed homotopy classes of maps from $X$ to $K(G,n)$, where $K(G,n)$ is a $CW$-complex of type $(G,n)$, see \cite{hu}.
It is also well known that the circle group $\mathbb S^1$ is a space of type $(\mathbb Z,1)$.
The cohomological dimension $\dim_G(X)$ is the largest number $n$ such that there exists a closed subset $A\subset X$ with $H^n(X,A;G)\neq 0$. Equivalently, for a metric space $X$ we have $\dim_GX\leq n$ if and only if for any closed pair $A\subset B$ in $X$ the homomorphism
$j_{B,A}^n:H^{n}(B;G)\to H^{n}(A;G)$, generated by the inclusion $A\hookrightarrow B$, is surjective, see \cite{dy}. This means that
 every map from $A$ to $K(G,n)$ can be extended over $B$. For every $G$ we have $\dim_GX\leq\dim_{\mathbb Z}X\leq\dim X$, and $\dim_{\mathbb Z}X=\dim X$ in case $\dim X<\infty$ \cite{ku} (on the other hand, there is an infinite-dimensional compactum $X$ with $\dim_{\mathbb Z}X=3$, see \cite{dr}).

Suppose $(K,A)$ is a pair of compact sets in a space $X$ with $\varnothing\neq A\subset K$. We say that $K$ is
an {\em $k$-cohomology membrane spanned on $A$ for an element $\gamma\in H^k(A;G)$} if $\gamma$ is not extendable over $K$,
but it is extendable over every proper closed subset of $K$ containing $A$. Here, $\gamma\in H^k(A;G)$ is {\em extendable over $K$} means that
$\gamma$ is contained in the image $j_{K,A}^k\big(H^{k}(K;G)\big)$. Concerning extendability, we are using the following simple fact:
\begin{lem}
Let $A, B$ be closed sets in $X$ with $X=A\cup B$. Then $\gamma\in H^k(A;G)$ is extendable over $X$ if and only if $j^k_{A,\Gamma}(\gamma)$ is extendable over $B$, where $\Gamma=A\cap B$.
\end{lem}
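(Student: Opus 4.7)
The plan is to deduce the lemma from the Mayer--Vietoris exact sequence in \v{C}ech cohomology associated with the closed cover $\{A,B\}$ of $X$. The relevant fragment reads
\[
H^k(X;G) \xrightarrow{\Phi} H^k(A;G)\oplus H^k(B;G) \xrightarrow{\Psi} H^k(\Gamma;G),
\]
where $\Phi(\alpha)=\bigl(j^k_{X,A}(\alpha),\,j^k_{X,B}(\alpha)\bigr)$ and $\Psi(\xi,\eta)=j^k_{A,\Gamma}(\xi)-j^k_{B,\Gamma}(\eta)$.

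The ``only if'' direction is pure functoriality of restriction. If $\gamma=j^k_{X,A}(\alpha)$ for some $\alpha\in H^k(X;G)$, set $\beta:=j^k_{X,B}(\alpha)$; then
\[
j^k_{B,\Gamma}(\beta)=j^k_{X,\Gamma}(\alpha)=j^k_{A,\Gamma}(\gamma),
\]
so $j^k_{A,\Gamma}(\gamma)$ is extendable to $B$.

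For the ``if'' direction, assume $\beta\in H^k(B;G)$ satisfies $j^k_{B,\Gamma}(\beta)=j^k_{A,\Gamma}(\gamma)$; this is exactly the condition $\Psi(\gamma,\beta)=0$. Exactness of the Mayer--Vietoris sequence then yields $\alpha\in H^k(X;G)$ with $\Phi(\alpha)=(\gamma,\beta)$, and in particular $j^k_{X,A}(\alpha)=\gamma$, showing that $\gamma$ extends over $X$.

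The only point requiring attention is the availability of the Mayer--Vietoris sequence, but since $A$ and $B$ are closed subsets of a locally compact separable metric space this is standard for \v{C}ech cohomology with coefficients in any Abelian group. Alternatively, one can argue geometrically using the identification $H^k(\cdot;G)\cong[\cdot,K(G,k)]$ recalled at the start of Section~2: represent $\gamma$ by a map $f\colon A\to K(G,k)$, use the hypothesis to obtain $g\colon B\to K(G,k)$ with $g|_\Gamma$ homotopic to $f|_\Gamma$, invoke the homotopy extension property for the closed pair $(B,\Gamma)$ (available since $K(G,k)$ may be taken to be an ANE for metric spaces) to adjust $g$ so that $g|_\Gamma=f|_\Gamma$, and glue via the pasting lemma for closed sets. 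Either route is routine; I expect the Mayer--Vietoris argument to be the preferred one because it avoids any appeal to the CW model.
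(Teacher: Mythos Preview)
Your proof is correct and matches the paper's argument essentially line for line: both directions are derived from the same fragment of the Mayer--Vietoris sequence with the maps $\Phi=\varphi^k$ and $\Psi=\psi^k$, with your ``only if'' direction written slightly more directly via functoriality. The alternative geometric argument via $K(G,k)$ is a sound bonus not present in the paper.
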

\begin{proof}
This follows from the Mayer-Vietoris exact sequence
{ $$
\begin{CD}
H^{k}(X;G)@>{\varphi^k}>>H^{k}(A;G)\oplus H^k(B;G)@>{\psi^k}>> H^k(\Gamma;G),\\
\end{CD}
$$}\\
where $\varphi^k(\gamma)=(j^k_{X,A}(\gamma),j^k_{X,B}(\gamma))$ and $\psi^k(\gamma_1,\gamma_2)=j^k_{A,\Gamma}(\gamma_1)-j^k_{B,\Gamma}(\gamma_2)$.
Indeed, suppose $\gamma_\Gamma=j^k_{A,\Gamma}(\gamma)$ is extendable over $B$. So, there is $\alpha\in H^k(B;G)$ with $j^k_{B,\Gamma}(\alpha)=\gamma_\Gamma$. Then, $\psi^k(\gamma,\alpha)=0$, which implies the existence of $\beta\in H^k(X;G)$ such that
$\varphi^k(\beta)=(\gamma,\alpha)$. This yields $j^k_{X,A}(\beta)=\gamma$. Hence, $\gamma$ is extendable over $X$.

To prove the other implication, suppose $j^k_{X,A}(\beta)=\gamma$ for some $\beta\in H^k(X;G)$, and let $\alpha=j^k_{X,B}(\beta)$. Then
$\psi^k(\gamma,\alpha)=0$, which means that $j^k_{B,\Gamma}(\alpha)=j^k_{A,\Gamma}(\gamma)$. Therefore, $j^k_{A,\Gamma}(\gamma)$ is extendable over $B$.
\end{proof}

%We use below the following notation: Suppose $A$ is partition in a space $Z$ between two closed disjoint sets $P, Q\subset Z$. Then there are two open disjoint subset $W_P, W_Q$ of $Z$ containing $P$ and $Q$, respectively, such that $Z\backslash A=W_P\cup W_Q$. Then we denote
%$\Lambda_P=W_P\cup A$ and $\Lambda_Q=W_Q\cup A$.
\begin{lem}\label{membrane} Let $X$ be a homogeneous space with $\dim_GX=n>1$.
 For every $x\in X$ there exists a compactum $M$ containing $x$ such that all sufficiently small neighborhoods $W$ of $x$ in $X$ have the following property:
For every open neighborhood $V$ of $x$ with $\overline V\subset W$  
there exists a nontrivial $\gamma_V\in H^{n-1}(M\cap\rm{bd}\overline V;G)$ and an $(n-1)$-cohomology membrane $K_V\subset M\cap\overline V$ for $\gamma_V$ spanned on $M\cap{\rm bd}\overline V$.
\end{lem}

\begin{proof}
Since $X$ is a countable union of compact sets, there exists a compactum $Y\subset X$ with $\dim_G Y=n$ (otherwise, by the countable sum theorem for $\dim_G$, $\dim_G X\leq n-1$). Since $\dim_G Y=n$ there exists a proper closed subset $F\subset Y$ and  $\gamma\in H^{n-1}(F;G)$ such that $\gamma$ is not extendable over $Y$. Using the continuity of \v{C}ech cohomology \cite{sp}, we can apply Zorn's lemma to conclude there exists a
minimal compact set $M\subset Y$ containing $F$ such that $\gamma$ is not extendable over $M$, but it is extendable over every proper closed subset of $M$ containing $F$. Since $X$ is homogeneous, we can assume that $x\in M\backslash F$.
Now, let show that any neighborhood $W$ of $x$ with $\overline W\subset X\backslash F$ is as required.
Indeed, suppose $V$ is an open neighborhood of $x$ with $\overline V\subset W$.
Then
$M\backslash V$ is a proper closed subset of $M$ containing $F$. Hence, there exists $\gamma'\in H^{n-1}(M\backslash V;G)$ extending $\gamma$ such that $\gamma'$ is not extendable over $M\cap\overline V$. Let $\gamma_V=j_{M\backslash V,M\cap{\rm bd}\overline V}(\gamma')$. 
Observe that $M=(M\backslash V)\cup(M\cap\overline V)$ with $(M\backslash V)\cap((M\cap\overline V)=M\cap{\rm bd}\overline V$. So, by Lemma 2.1,
$M\cap{\rm bd}\overline V$ is nonempty, otherwise $\gamma'$ would be extendable over $M$. By the same reason, $\gamma_V$ is nontrivial and not extendable over $M\cap\overline V$.
Therefore, there is a minimal closed set $K_V\subset M\cap\overline V$ containing  $M\cap{\rm bd}\overline V$ such that $\gamma_V$ is not extendable over $K_V$. Then $K_V$ is an $(n-1)$-cohomology membrane for $\gamma_V$ spanned on $M\cap{\rm bd}\overline V$.
\end{proof}

\begin{pro}\label{pair}
Let $A\subset P$ be a compact pair and $\gamma$ be a nontrivial element of $H^{n-1}(A;G)$.
Suppose there are  closed subsets $P_1,P_2$ of $P$ satisfying the following conditions:
\begin{itemize}
\item $P_1\cup P_2=P$ and $P_1\cap P_2=C\neq\varnothing$;
\item $\gamma$ is extendable over $P_i\cup A$ for each $i=1,2$, but $\gamma$ is not extendable over $P$.
\end{itemize}
Then $H^{n-1}(C,C\cap A;G)\neq 0$.
\end{pro}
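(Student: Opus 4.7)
The plan is to manufacture a nontrivial obstruction in $H^{n-1}(C \cup A, A; G)$ out of the two given extensions of $\gamma$, and then transport it to $H^{n-1}(C, C \cap A; G)$ by excision.

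By hypothesis, choose $\alpha_i \in H^{n-1}(P_i \cup A; G)$ with $j^{n-1}_{P_i \cup A, A}(\alpha_i) = \gamma$ for $i=1,2$, and form the difference
$$\beta := j^{n-1}_{P_1 \cup A, C \cup A}(\alpha_1) - j^{n-1}_{P_2 \cup A, C \cup A}(\alpha_2) \in H^{n-1}(C \cup A; G).$$
Both summands restrict to $\gamma$ on $A$, so $j^{n-1}_{C \cup A, A}(\beta) = 0$ automatically.

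The crucial step will be to show $\beta \neq 0$. Suppose otherwise; then $\alpha_1|_{C \cup A} = \alpha_2|_{C \cup A}$. I would apply Lemma 2.1 to the decomposition $K = (P_1 \cup A) \cup (P_2 \cup A)$, whose intersection is exactly $C \cup A$. The lemma says that $\alpha_1 \in H^{n-1}(P_1 \cup A; G)$ extends over $K$ provided its restriction to $C \cup A$ extends over $P_2 \cup A$—and that restriction is precisely $\alpha_2|_{C \cup A}$, which admits the extension $\alpha_2$. So there exists $\tilde\alpha \in H^{n-1}(K; G)$ with $j^{n-1}_{K, P_1 \cup A}(\tilde\alpha) = \alpha_1$, and further restriction to $A$ gives $j^{n-1}_{K, A}(\tilde\alpha) = \gamma$, contradicting the assumed non-extendability of $\gamma$ over $K$.

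Since $\beta$ is a nonzero element of $H^{n-1}(C \cup A; G)$ lying in the kernel of $j^{n-1}_{C \cup A, A}$, the cohomology exact sequence of the pair $(C \cup A, A)$ produces a nonzero preimage $\tilde\beta \in H^{n-1}(C \cup A, A; G)$. Excision for Čech cohomology of compact pairs then identifies $H^{n-1}(C \cup A, A; G)$ with $H^{n-1}(C, C \cap A; G)$, and the image of $\tilde\beta$ is the required nontrivial class. The delicate point I expect is this last identification: it needs to be justified via the strong excision property in the Čech theory, equivalently via the canonical homeomorphism of pointed compacta $(C \cup A)/A \cong C/(C \cap A)$, together with the fact that Čech cohomology is a homotopy invariant of such compact quotients.
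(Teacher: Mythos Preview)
Your argument is correct and takes a genuinely different route from the paper's. The paper works one degree up, through the connecting map $\partial_{K,A}\colon H^{n-1}(A;G)\to H^{n}(K,A;G)$: it shows that $\widetilde\gamma:=\partial_{K,A}(\gamma)$ is nonzero (because $\gamma$ is not extendable over $K$) and, by comparing the Mayer--Vietoris sequence for $K=P_1\cup P_2$ with the long exact sequences of the pairs $(P_i,P_i\cap A)$, that $\widetilde\gamma$ lies in the kernel of $\varphi^{n}_{K,A}\colon H^{n}(K,A;G)\to H^{n}(P_1,P_1\cap A;G)\oplus H^{n}(P_2,P_2\cap A;G)$; the relative Mayer--Vietoris sequence then forces $\widetilde\gamma$ to come from $H^{n-1}(C,C\cap A;G)$. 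You instead stay in degree $n-1$ throughout: you build the obstruction explicitly as the difference of the two given extensions restricted to $C\cup A$, use Lemma~2.1 for the decomposition $K=(P_1\cup A)\cup(P_2\cup A)$ to see this difference is nonzero, and then invoke strong excision for \v{C}ech cohomology of compact pairs to pass from $(C\cup A,A)$ to $(C,C\cap A)$. Your approach is more hands-on and yields a concrete representative for the nontrivial class; the paper's diagram chase avoids the excision step but is less transparent about what the class actually is.
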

\begin{proof}
Consider the commutative diagram below whose rows are parts of Mayer-Vietoris exact sequences, while the columns are exact sequences for the corresponding couples:

{ $$
\begin{CD}
H^{n-1}(P;G)@>{{\varphi^{n-1}_P}}>>H^{n-1}(P_1;G)\oplus H^{n-1}(P_2;G)\\%@>{{j}}>>\check{H}^{n}(K_1;G)\oplus\check{H}^{n}(K_2;G)\\
@ VV{j^{n-1}_{P,A}}V
@VV{j^{n-1}_{P_1\oplus P_2}}V\\%@VV{i_1^*\oplus i_2^*}V\\
H^{n-1}(A;G)@>{{\varphi^{n-1}_A}}>>H^{n-1}(A\cap P_1;G)\oplus H^{n-1}(A\cap P_2;G)\\%@>{{j_1}}>>\check{H}^{n}(F_1;G)\oplus\check{H}^{n}(F_2;G).
@ VV{\partial_{P,A}}V
@VV{\partial_{P_1\oplus P_2}}V\\
H^{n}(P,A;G)@>{{\varphi^{n}_{P,A}}}>>H^{n}(P_1,P_1\cap A;G)\oplus H^{n}(P_2,P_2\cap A;G)\\
%@ VV{i_{P,A}}V
%@VV{i_{P_1\oplus P_2}}V\\
%H^{n}(P;G)@>{{\varphi^{n}_{P}}}>>H^{n}(P_1;G)\oplus H^{n}(P_2;G)\\
\end{CD}
$$}\\

Here, the maps $j^{n-1}_{P_1\oplus P_2}$ and $\partial_{P_1\oplus P_2}$ are defined by $$j^{n-1}_{P_1,A\cap P_1}\oplus j^{n-1}_{P_2,A\cap P_2},$$
$$\partial_{P_1\oplus P_2}=\partial_{P_1,P_1\cap A}\oplus\partial_{P_2,P_2\cap A}.$$
%and  $$i_{P_1\oplus P_2}=i_{P_1,P_1\cap A}\oplus i_{P_2,P_2\cap A}.$$
Recall also that $\varphi^{n-1}_P=(j^{n-1}_{P,P_1},j^{n-1}_{P,P_2})$, the maps $\varphi^{n-1}_A$, $\varphi^{n}_{P,A}$ and $\varphi^{n}_P$ are defined similarly.

Denote $\alpha_i=j^{n-1}_{A,A\cap P_i}(\gamma)$, $i=1,2$.
Since $\gamma$ is extendable over $A\cup P_i$, there exists $\gamma_i\in H^{n-1}(A\cup P_i)$ extending $\gamma$, i.e. $j^{n-1}_{A\cup P_i,A}(\gamma_i)=\gamma$. Let
$\beta_i=j^{n-1}_{A\cup P_i,P_i}(\gamma_i)$. It follows from the Mayer-Vietoris exact sequence
{ $$
\begin{CD}
H^{n-1}(A\cup P_i;G)\to H^{n-1}(A;G)\oplus H^{n-1}(P_i;G)\to H^{n-1}(A\cap P_i;G)\to...\\
\end{CD}
$$}
that $j^{n-1}_{P_i,A\cap P_i}(\beta_i)=\alpha_i$ for every $i=1,2$.
This implies $j^{n-1}_{P_1\oplus P_2}((\beta_1,\beta_2))=(\alpha_1,\alpha_2)$. Since the second column is a part of an exact sequence, the last equality yields $\partial_{P_1\oplus P_2}(\varphi^{n-1}_A(\gamma))=0$. Hence, $\varphi^n_{P,A}(\partial_{P,A}(\gamma))=0$. Note that  $\widetilde\gamma=\partial_{P,A}(\gamma)\neq 0$ because the first column is exact and $\gamma$ is not extendable over $P$.

Finally, since $\varphi^n_{P,A}(\widetilde\gamma)=0$, Proposition \ref{pair} follows from the Mayer-Vietoris exact sequence
{ $$
\begin{CD}
H^{n-1}(C,C\cap A;G)@>{\triangle}>>H^{n}(P,A;G)@>{\varphi^n_{P,A}}>> H^n(P_1,P_1\cap A;G)\oplus  H^n(P_2,P_2\cap A;G).\\
\end{CD}
$$}
\end{proof}

\begin{cor}
Let $X$ be a space with $\dim_GX=n$ and $C\subset X$ be a nonempty separator of $X$. If there is an open set $U$ such that $C\subset U$ and
$\overline U$ is an $(n-1)$-cohomology membrane for some $\gamma\in H^{n-1}(\rm{bd}\overline U;G)$ spanned on $\rm{bd}\overline U$, then
$H^{n-1}(C;G)\neq 0$.
\end{cor}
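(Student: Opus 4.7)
The plan is to apply Proposition~\ref{pair} with $K=\overline U$ and $A=\mathrm{bd}\,\overline U$, choosing a decomposition $K=P_1\cup P_2$ that comes from the separator $C$. First note that $C\cap A=\varnothing$, because $C\subset U\subset\mathrm{int}\,\overline U$ while $A=\overline U\setminus\mathrm{int}\,\overline U$.

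Write $X\setminus C=G_1\cup G_2$ with $G_1,G_2$ disjoint and open, and put $P_i=\overline U\setminus G_{3-i}=C\cup(\overline U\cap G_i)$ for $i=1,2$. Each $P_i$ is closed in $\overline U$ (as the complement in $\overline U$ of the open set $G_{3-i}$); $P_1\cup P_2=\overline U$; and a direct set-theoretic computation using $C\cap G_j=\varnothing$ and $G_1\cap G_2=\varnothing$ gives $P_1\cap P_2=C$. I would then check that each $P_i\cup A$ is a \emph{proper} closed subset of $\overline U$ containing $A$. Containment of $A$ is automatic, and any point $x\in U\cap G_{3-i}$ witnesses properness, since such an $x$ lies in $\overline U$ but in neither $P_i$ nor $A$. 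That $U$ meets both $G_1$ and $G_2$ follows from $C\subset U$ together with the fact that $C$ contains points of $\overline{G_1}\cap\overline{G_2}$---automatic when $C$ is irreducible, and otherwise secured by first passing to the smaller irreducible separator $\overline{G_1}\cap\overline{G_2}\subset C$, for which the membrane hypothesis on $\overline U$ is unchanged.

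Since $\overline U$ is an $(n-1)$-cohomology membrane spanned on $A$ for $\gamma$, the membrane property supplies extensions of $\gamma$ over each proper closed subset of $\overline U$ containing $A$, hence over each $P_i\cup A$; by hypothesis $\gamma$ is not extendable over $K=\overline U$. Proposition~\ref{pair} then yields $H^{n-1}(C,\,C\cap A;G)\neq 0$, and since $C\cap A=\varnothing$ this is precisely $H^{n-1}(C;G)\neq 0$. The main obstacle is the non-degeneracy verification that $U$ meets both sides of the separation, which is what forces $P_i\cup A\subsetneq\overline U$; everything else is a routine assembly of the membrane condition, Proposition~\ref{pair}, and the set-theoretic decomposition.
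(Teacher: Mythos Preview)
Your approach is exactly the paper's: apply Proposition~\ref{pair} with $K=\overline U$, $A=\mathrm{bd}\,\overline U$, $P_i=\overline U\setminus G_{3-i}$, and then use $C\cap A=\varnothing$ to identify $H^{n-1}(C,C\cap A;G)$ with $H^{n-1}(C;G)$. The paper's proof is the single line ``follows from Proposition~2.3 and the exact sequence of the pair''; you have correctly supplied the decomposition and the set-theoretic checks that the paper leaves implicit.

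There is, however, a small gap in your verification that each $P_i\cup A$ is \emph{proper} in $\overline U$. You argue that $U$ meets both $G_i$ because $C$ contains points of $\overline{G_1}\cap\overline{G_2}$, proposing to pass to that ``irreducible separator'' otherwise. But $\overline{G_1}\cap\overline{G_2}$ may be empty even when $X$ is connected: take $X=\mathbb R^2$, $C=\{1\le|x|\le 2\}$, $G_1=\{|x|<1\}$, $G_2=\{|x|>2\}$. In that case there is no irreducible separator to pass to, and in any event proving $H^{n-1}(\overline{G_1}\cap\overline{G_2};G)\neq 0$ would not yield $H^{n-1}(C;G)\neq 0$. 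The clean fix (assuming $X$ connected, which the paper tacitly does and which the stated application to $\mathbb R^n$ satisfies) is more direct: each $G_i$ is nonempty, open, and not closed, so $\mathrm{bd}\,G_i\neq\varnothing$; moreover $\mathrm{bd}\,G_i\subset X\setminus(G_1\cup G_2)=C\subset U$, so the open set $U$ contains a point of $\overline{G_i}\setminus G_i$ and therefore meets $G_i$. With this replacement your argument is complete.
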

\begin{proof}
Let $A=\rm{bd}\overline U$ and $P_1,P_2$ be closed subsets of $X$ with $P_1\cap P_2=C$ and $P_1\cup P_2=\overline U$. Since $H^{n-1}(C;G)=H^{n-1}(C,C\cap\rm{bd}\overline U;G)$,
the proof follows from Proposition 2.3
\end{proof}
Corollary 2.4 implies the well known fact \cite{hw} that $H^{n-1}(C;G)\neq 0$ for any compact separator $C$ of $\mathbb R^n$. Indeed, take any ball $\mathbb B^n$ with $C\subset\rm{int}\mathbb B^n$.

  By $X^*= X\cup \{\infty\}$ we denote the one-point compactification of a space $X$. By a {\it metric on $X$ inherited from $X^*$} we mean the  restriction of ${\rm dist}$ to $X$, where ${\rm dist}$ is any metric on $X^*$.

The following version of Effros' theorem \cite{e} is folklore. For the sake of completeness we include a proof.

\begin{thm}\label{effros}
Let $X$ be a homogeneous locally compact space and $\rho$ be a metric on $X^*$. 
Then for any $a\in X$ and $\varepsilon>0$ there exists $\delta>0$ such that for every $x\in X$ with $\rho(x,a)<\delta$ there exists a homeomorphism $h\colon X\to X$ with $h(a)=x$ and $\rho(h(y),y) < \varepsilon$ for all $y\in X$.
\end{thm}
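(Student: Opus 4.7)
The plan is to derive this as a direct application of Effros' classical open-mapping theorem for continuous transitive actions of Polish groups, with the acting group taken to be the self-homeomorphism group $\mathcal H(X)$. After replacing $\rho$ by the bounded equivalent metric $\min(\rho,1)$ (harmless once one restricts to $\varepsilon<1$), I would equip $\mathcal H(X)$ with a Polish group topology whose basic neighborhoods of the identity are the sets
\[V_\varepsilon=\{h\in\mathcal H(X):\rho(h(y),y)<\varepsilon\text{ for every }y\in X\}.\]
For locally compact separable metric $X$ this can be arranged by passing to the one-point compactification $\widehat X=X\cup\{\infty\}$ with a compatible metric $\widehat\rho$, identifying $\mathcal H(X)$ with the closed subgroup of $\mathcal H(\widehat X)$ fixing $\infty$, and taking the two-sided uniform metric inherited from $\widehat\rho$; this is a standard Polish group.

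Next, consider the orbit map $\pi_a\colon\mathcal H(X)\to X$ given by $\pi_a(h)=h(a)$. It is continuous, and by homogeneity of $X$ it is surjective. Effros' theorem now applies: a continuous transitive action of a Polish group on a non-meager Polish space has an open orbit map at every point. The underlying argument is the usual Baire-category one: pick a symmetric open neighborhood $W$ of the identity with $W\cdot W\subset V_\varepsilon$ and a countable dense sequence $(g_n)$ in $\mathcal H(X)$; by separability $X=\bigcup_n g_n\cdot(W\cdot a)$, so by Baire some $g_n\cdot(W\cdot a)$ is somewhere dense, and multiplication by an element of $W^{-1}\cdot g_n^{-1}$ then forces $V_\varepsilon\cdot a$ to contain an open neighborhood of $a$.

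Applying this openness at $a$ yields a $\delta>0$ with $\{x\in X:\rho(x,a)<\delta\}\subset\pi_a(V_\varepsilon)$, so every such $x$ equals $h(a)$ for some $h$ satisfying $\rho(h(y),y)<\varepsilon$ for all $y\in X$, which is the required conclusion. The main obstacle is the initial setup of a Polish group topology on $\mathcal H(X)$ whose basic neighborhoods at the identity realize the \emph{global} uniform-closeness condition; this is where the locally compact hypothesis enters, via the compactification trick, and is the only genuinely nontrivial technical point in the argument. Once that is fixed, the conclusion follows essentially off the shelf from Effros' theorem, as in Krupski's version \cite[Proposition 1.4]{kru1}.
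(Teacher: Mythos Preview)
The paper does not give its own proof of this statement; it is quoted with a citation to Krupski \cite[Proposition~1.4]{kru1}. Your outline is the standard derivation from Effros' theorem, and for compact $X$ (the usual setting of Krupski's result in continuum theory) it is correct as written.

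For non-compact $X$ there is a gap at exactly the point you yourself flag as the main obstacle. The Polish topology on $\mathcal H(X)$ inherited from $\mathcal H(\widehat X)$ has basic identity-neighbourhoods $\{h:\sup_y\widehat\rho(h(y),y)<\varepsilon\}$ measured in the compactification metric $\widehat\rho$, not in the given metric $\rho$, and these two families need not be comparable. For instance, the bounded metric $\min(\rho,1)$ on $\mathbb R$ cannot extend to a metric on the one-point compactification: distinct integers remain at mutual distance $1$ yet would have to converge to $\infty$. Thus uniform $\widehat\rho$-closeness of $h$ to the identity does not force uniform $\rho$-closeness, and the compactification trick delivers only the weaker conclusion that $h$ can be chosen $\varepsilon$-close to the identity on any \emph{prescribed compact subset} of $X$. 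That weaker version is in fact all the paper actually uses in the proof of Theorem~3.2 (closeness on $\overline{W_0}$ and on $K_U$ is what matters there), so nothing downstream is harmed; but your sketch does not establish the theorem verbatim for an arbitrary metric $\rho$ on a non-compact $X$.
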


\begin{proof} Let $\mathcal H(X^*)$ be the space of all homeomorphism of $X^*$, endowed with the compact-open topology. Note that $\mathcal H(X^*)$ is a Polish group and its topology is generated by the metric $\hat{\rho}(f,g)=\sup\{\rho(f(x),g(x))\colon x\in X^*\}$. Therefore the set $\mathcal H_X$ consisting of all $h\in\mathcal H(X^*)$ with $h(\infty)=\infty$ is a closed subgroup of $\mathcal H(X^*)$, so $\mathcal H_X$ is also a Polish group. Recall that every homeomorphism $h$ on $X$ can be extended to a homeomorphism $\widetilde h\in\mathcal H_X$, so $\mathcal H_X\neq\varnothing$.
Since the action
$T^*\colon \mathcal H(X^*)\times X^*\to X^*$, $T^* (g,x) = g(x)$, is continuous, so is the action $T\colon \mathcal H_X\times X\to X$, $T(h,x)=h(x)$. Moreover,  $T$ is transitive because $X$ is homogeneous. Hence, by \cite[Theorem 1.1]{vm2}, $T$ is micro-transitive, i.e., for every $x \in X$ and every neighbourhood $U$ of the identity in $\mathcal H_X$ the set $Ux=\{h(x)\colon h\in U\}$ is a neighbourhood of $x$. This implies the statement of the theorem.
  \end{proof}

\section{Proof of Theorem 1.3}

First, consider the case when $X$ is homogeneous. Suppose $C\subset X$ is closed such that $C\cap\Gamma$ irreducibly separates $\Gamma$ and satisfies conditions $(i)-(ii)$. We are going to obtain a contradiction. To this end, fix a metric $\rho$ on $X$ inherited from the one-point compactification $X^*$ of $X$. Then $\Gamma\backslash C=G_1\cup G_2$ with $C'=\overline G_1\cap\overline G_2\cap\Gamma\subset C$, where $G_1, G_2$ are disjoint open subsets of $\Gamma$.
By Lemma 2.2, there exist a compactum $M$ containing $b$ such that all sufficiently small neighborhoods $W$ of $b$ satisfy the thesis of that lemma. We fix such a neighborhood $W$ having a compact closure with
$\overline W\subset\Gamma$. 
\begin{claim}
Following the notations from Lemma 2.2, there exist sufficiently small neighborhoods $V$ of $b$ in $X$ such that $\overline V\subset W$, $\dim_G{\rm bd}\overline V\leq n-1$ and $K_V\backslash\rm{bd}\overline V$ meets both sets $G_1$ and $G_2$.
\end{claim}
Indeed, let $\varepsilon=\min\{\rho(\overline W,X\backslash\Gamma), \rho(b,X\backslash W)\}$ and $\delta>0$ be a number from Theorem 2.5 corresponding to $\varepsilon$ and the point $b$. Take any neighborhood $V$ of $b$ such that $\overline V\subset W$ and the diameter of $V$ is less than $\delta$.  
Since $X$ is finite-dimensional, according to \cite{dk} and \cite{dk1} we can suppose also that  $\dim_G{\rm bd}\overline V\leq n-1$.
Then there is a $\epsilon$-small homeomorphism $h$ on $X$ so that $\overline{h(V)}\subset W$ and $h(b)\in K_V\backslash\rm{bd}\overline V$.
Hence, considering the sets $h^{-1}(V)$ and $h^{-1}(K_V)$ instead of $V$ and $K_V$,
we can assume that $b\in K_V\backslash\rm{bd}\overline V$. Moreover, $\dim_G(K_V\backslash\rm{bd}\overline V)=n$, otherwise $\gamma_V$ would be extendable over $K_V$.

Further, since $\dim_G C\leq n-1$ and $\dim_G(K_V\backslash\rm{bd}\overline V)=n$, $K_V\backslash\rm{bd}\overline V$ is not contained in $C$. So, $K_V\backslash\rm{bd}\overline V$ meets at least one $G_i$, $i=1,2$. If $K_V\backslash\rm{bd}\overline V$ intersects only $G_1$,
then Theorem 2.5 allows us to push $K_V$ towards $G_2$ by a small homeomorphism $h:X\to X$ such that $h(b)\in G_2$, $h(K_V\backslash\rm{bd}\overline V)\cap G_1\neq\varnothing$ and $h(V)$ still contains $b$. This completes the proof of Claim 1.

\begin{claim}
Let $V$ be a neighborhood of $b$ satisfying Claim 1. If $H^{n-2}(C'\cap M\cap\rm{bd}\overline V;G)=0$, we are done.
\end{claim}
Indeed, let $A=M\cap\rm{bd}\overline V$, $P=K_V$ and
$P_i=P\cap\overline G_i$, $i=1,2$. Clearly, Then $P_1\cup P_2=P$ and $P_1\cap P_2=K_V\cap C'$. Hence, 
$A\cap P_1\cap P_2=C'\cap M\cap\rm{bd}\overline V$. Therefore, $\gamma_V$ is a non-trivial element of $H^{n-1}(A;G)$ which is not extendable over $P$. Because $K_V\backslash\rm{bd}\overline V$ meets both sets $G_1$ and $G_2$,
each $A\cup P_i$ is a proper subset of $K_V$ containing  $A$. So, $\gamma_V$
is extendable over each $A\cup P_i$. Therefore, by Proposition 2.3, $H^{n-1}(C',C'\cap A)\neq 0$.
On the other hand, we have the exact sequence
$$
\begin{CD}
H^{n-2}(C'\cap A;G)@>{}>>H^{n-1}(C',C'\cap A;G)@>{}>> H^{n-1}(C';G),\\
\end{CD}
$$
where $H^{n-2}(C'\cap A;G)=0$.
Since $C'$ is a closed subset of $C$, $\dim_GC'\leq n-1$. The last inequality together with $H^{n-1}(C;G)=0$ implies $H^{n-1}(C';G)=0$. Hence, 
$H^{n-1}(C',C'\cap A;G)=0$, a contradiction. This completes the proof of Claim 2.

We use below the following notation: Suppose $\Pi$ is partition in a space $Z$ between two closed disjoint sets $P, Q\subset Z$. Then there are two open disjoint subset $W_P, W_Q$ of $Z$ containing $P$ and $Q$, respectively, such that $Z\backslash\Pi=W_P\cup W_Q$. Then we denote
$\Lambda_P=W_P\cup\Pi$ and $\Lambda_Q=W_Q\cup\Pi$.
\begin{claim}
Suppose that $V$ is a neighborhood of $b$ satisfying Claim 1. %such that $H^{n-2}(C'\cap M\cap\rm{bd}\overline V;G)\neq 0$.  
Then there is another neighborhood $U$ of $b$  with $\overline U\subset V$ such that:
\begin{itemize}
%\item[(i)] Each $K_{U,i}={\rm bd}\overline U\cup (K_U\cap\overline G_i)$, $i=1,2$, is a closed proper subset of the set $K_U={\rm bd}\overline U\cup(K_V\cap\overline U)$;
\item[(i)] The element $\gamma_V$ is extendable to an element $\gamma_{V,U}\in H^{n-1}(M(V,U);G)$, where $M(V,U)=\overline V\backslash U$;
\item[(ii)] The element $\gamma_U=j_{M(V,U),{\rm bd}\overline U}(\gamma_{V,U})$ is not extendable over the set $K_U={\rm bd}\overline U\cup(K_V\cap\overline U)$, but  
$\gamma_U$ is extendable over each of the sets $K_{U,i}={\rm bd}\overline U\cup (K_U\cap\overline G_i)$, $i=1,2$;
\item[(iii)] If $\Pi$ separates $M(V,U)$ between ${\rm bd}\overline U$ and ${\rm bd}\overline V$, then there is $\gamma_\Pi\in H^{n-1}(\Pi;G)$ such that $\gamma_\Pi$ is not extendable over $\Lambda_{{\rm bd}\overline U}\cup(K_V\cap\overline U)$, but it is extendable over each set
    $\Lambda_{{\rm bd}\overline U}\cup(K_V\cap\overline U\cap\overline G_i)$, $i=1,2$.
\end{itemize}
\end{claim}
Take points $x_i\in (K_V\backslash\rm{bd}\overline V)\cap G_i$, $i=1,2$. Since 
$K_V$ is an $(n-1)$-cohomology membrane for $\gamma_V$ spanned on $M\cap\rm{bd}\overline V$ and each $K_{V,i}=(M\cap\rm{bd}\overline V)\cup (K_V\cap\overline G_i)$, $i=1,2$, is a proper closed subset of $K_V$ containing $M\cap\rm{bd}\overline V$, $\gamma_V$ can be extended to $\gamma_i\in H^{n-1}(K_{V,i};G)$, $i=1,2$. Using that $\dim_G{\rm bd}\overline V\leq n-1$, we can extend $\gamma_V$ to an element $\gamma_V^*\in H^{n-1}(\rm{bd}\overline V;G)$.
Then $\gamma_V^*$ and $\gamma_i$ provide elements $\gamma_i^*\in H^{n-1}(K_{V,i}^*;G)$, $i=1,2$,
where $K_{V,i}^*=\rm{bd}\overline V\cup(K_V\cap\overline G_i)$, such that $j_{K_{V,1}^*,\rm{bd}\overline V}(\gamma_1^*)=j_{K_{V,2}^*,\rm{bd}\overline V}(\gamma_2^*)=\gamma_V^*$. %Then $\gamma_V^*=j_{K_{V,1}^*,\rm{bd}\overline V}(\gamma_1^*)$ is an element of 

Let $K$ be a $CW$-complex of type $K(G,n-1)$ and the maps $f_V:\rm{bd}\overline V\to K$, $g_{V,i}:K_{V,i}^*\to K$ represent $\gamma_V^*$ and $\gamma_i^*$, respectively, such that both restrictions $g_{V,1}|\rm{bd}\overline V$ and $g_{V,2}|\rm{bd}\overline V$ coincide with $f_V$. Since $G$ is countable, $K$ is also countable and homotopy equivalent to a metrizable simplicial complex. So, we can suppose that $K$ is a metrizable simplicial complex, and let $d$ be a metric on $K$.
Because $K$ is a neighborhood extensor for the class of metrizable spaces, there is an open cover $\omega$ of $K$ such that any two $\omega$-close maps from a given space $Z$ into $K$ are homotopic. Moreover, 
there is an open set $\Omega_i$,  $i=1,2$, in $X$ containing $K_{V,i}^*$ and a map $g_i:\overline\Omega_i\to K$ extending $g_{V,i}$ such that $\overline\Omega_i$ is compact. By the same reason, there is an open set $\Omega_0\subset X$ with a compact closure and a map $g_0:\overline\Omega_0\to K$ extending $f_V$ such that $\rm{bd}\overline V\subset\Omega_0$. We may assume that each $\overline\Omega_i$, $i=0,1,2$, is contained in $W$.
Since $\Theta=\bigcup_{i=0}^{i=2}g_i(\overline\Omega_i)$ is a compact subset of $K$, we can find $\eta>0$ such that any two points $z_1,z_2\in\Theta$ are contained in an element of $\omega$ provided $d(z_1,z_2)<\eta$.  
Then for every $i=0,1,2$ there exists $\delta_i>0$ such that $d(g_i(x),g_i(y))<\eta/2$
for any $x,y\in\overline\Omega_i$ with $\rho(x,y)\leq\delta_i$. Since the points $x_1,x_2$ and $b$ belong to $V\backslash\rm{bd}\overline V$ and 
$\rm{bd}\overline V\subset\Omega_i$ for each $i=0,1,2$, the number
$$\delta=\min\{\delta_i, \rho({\rm bd}\overline V,X\backslash\Omega_i)/2,\rho(b,\rm bd\overline V)/2,\rho(\{x_1\}\cup\{x_2\},\rm bd\overline V)/2:i=0,1,2\}$$ is positive, and let $U=\{x\in V:\rho(x,{\rm bd}\overline V)>\delta\}$. Clearly, $U$ contains the points $x_1,x_2,b$. Moreover $\overline U\subset V$. Indeed, since $\overline U\subset\overline V$, if there is $x\in \overline U\backslash V$ then $x\in \rm bd\overline V$. So, $\rho(x,\rm bd\overline V)=0$ which means that $x\not\in\overline U$. Because $\delta\leq\rho({\rm bd}\overline V,X\backslash\Omega_i)/2$ for each $i=0,1,2$, the set $\overline V\backslash U$ is contained in $\Omega_i$.
Hence, all maps $g_i$ are well defined on $M(V,U)=\overline V\backslash U$.
For every $x\in M(V,U)$ there exists $y\in \rm bd\overline V$ with
$\rho(x,y)\leq\delta$, and since $g_i(y)=f_V(y)$ for all $i$, we have $d(g_i(x),g_j(x))<\eta$ for any $i,j\in\{0,1,2\}$ and $x\in M(V,U)$. This means that for all $x\in M(V,U)$ and $i,j\in\{0,1,2\}$ the points $g_i(x), g_j(x)$ belong to an element of $\omega$.
Therefore, for any closed set $B\subset M(V,U)$ the restrictions $g_{B,i}=g_i|B$,  $i=0,1,2$, are homotopic to each other and represent an element
$\gamma_B\in H^{n-1}(B;G)$. In particular, all $g_{M(V,U),i}$ represent $\gamma_{V,U}\in H^{n-1}(M(V,U);G)$. Similarly, all maps $g_{{\rm bd}\overline U,i}$
represent $\gamma_U\in H^{n-1}({\rm bd}\overline U;G)$. Moreover, since each $g_{M(V,U),i}$ extends $g_{B,i}$, we have $j_{M(V,U),B}(\gamma_{V,U})=\gamma_B$ for all closed $B\subset M(V,U)$. 

So, $j_{M(V,U),{\rm bd}\overline V}(\gamma_{V,U})=\gamma_V^*$ and $j_{M(V,U),{\rm bd}\overline U}(\gamma_{V,U})=\gamma_U$. This means that $\gamma_V^*$ is extendable over $M(V,U)$. Hence, by Lemma 2.1, $\gamma_V^*$ would be extendable over $M(V,U)\cup K_V$ provided $\gamma_U$ is extendable over 
$K_U={\rm bd}\overline U\cup(K_V\cap\overline U)$. In such a case,  
 $\gamma_V$ would be extendable over $K_V$, a contradiction. Therefore, $\gamma_U$ is not extendable over $K_U$.

Consider the sets 
$K_{U,i}={\rm bd}\overline U\cup (K_U\cap\overline G_i)$, $i=1,2$. Each $K_{U,i}$ is a proper closed subset of $K_U$ because so is $K_V\cap\overline U\cap\overline G_i$ in $K_V\cap\overline U$. Observe also that $K_{V,i}^*\cup M(V,U)=M(V,U)\cup (K_V\cap\overline U\cap\overline G_i)$. On the other hand, $\Omega_i$ contains both $K_{V,i}^*$ and $M(V,U)$. So, $\Omega_i$ contains $K_{U,i}$, $i=1,2$. Consequently,
and $g_i|K_{U,i}$ is well defined and extends $g_{{\rm bd}\overline U,i}$. Since $g_{{\rm bd}\overline U,i}$ represents $\gamma_U$, each
$g_i|K_{U,i}$, $i=1,2$, represents an element $\mu_i\in H^{n-1}(K_{U,i};G)$ with $j_{K_{U,i},{\rm bd}\overline U}(\mu_i)=\gamma_U$. This means that $\gamma_U$ is extendable over each of the sets $K_{U,i}$, $i=1,2$.
 
Finally, let $\Pi\subset M(V,U)$ be a closed set separating $M(V,U)$ between ${\rm bd}\overline U$ and ${\rm bd}\overline V$. Then 
$\Lambda_{{\rm bd}\overline V}$ contains both $\Pi$ and ${\rm bd}\overline V$, 
$M(V,U)=\Lambda_{{\rm bd}\overline V}\cup\Lambda_{{\rm bd}\overline U}$ with $\Pi=\Lambda_{{\rm bd}\overline V}\cap\Lambda_{{\rm bd}\overline U}$.
On the other hand, 
$j_{\Lambda_{{\rm bd}\overline V},{\rm bd}\overline V}(\gamma_{\Lambda_{{\rm bd}\overline V}})=\gamma_V^*$ means that $\gamma_V^*$ is extendable over
$\Lambda_{{\rm bd}\overline V}$. Since   
$j_{\Lambda_{{\rm bd}\overline V},\Pi}(\gamma_{\Lambda_{{\rm bd}\overline V}})=\gamma_\Pi$, according to Lemma 2.1, the assumption $\gamma_\Pi$ is extendable over the set 
$\Lambda_{{\rm bd}\overline U}\cup(K_V\cap\overline U)$ would imply that $\gamma_V^*$ is extendable over $M(V,U)\cup K_V$, in particular $\gamma_V$ would be extendable over $K_V$. So, $\gamma_\Pi$ is not extendable over $\Lambda_{{\rm bd}\overline U}\cup(K_V\cap\overline U)$. Let show that $\gamma_\Pi$ is extendable over each
set $\widetilde\Lambda_{{\rm bd}\overline U,i}=\Lambda_{{\rm bd}\overline U}\cup(K_V\cap\overline U\cap\overline G_i)$, $i=1,2$. Observe that $\widetilde\Lambda_{{\rm bd}\overline U,i}$ is contained in $\Omega_i$, $i=1,2$. Consequently, each $h_i=g_i|\widetilde\Lambda_{{\rm bd}\overline U,i}$ is well defined and extends
$g_{\Pi,i}$. On the other hand, all $g_{\Pi,i}$, $i=0,1,2$, are homotopic to each other and represent $\gamma_\Pi$. Hence, each $h_i$, $i=1,2$, represents an element $\nu_i\in H^{n-1}(\widetilde\Lambda_{{\rm bd}\overline U,i};G)$ with $j_{\widetilde\Lambda_{{\rm bd}\overline U,i},\Pi}(\nu_i)=\gamma_\Pi$. Therefore,
$\gamma_\Pi$ is extendable over each $\widetilde\Lambda_{{\rm bd}\overline U,i}$, $i=1,2$. This completes the proof of Claim 3.

\begin{claim}
Suppose $U, V$ are neighborhoods of $b$ satisfying the conditions from Claim 3. If $H^{n-2}(C'\cap K_V\cap\rm{bd}\overline U;G)=0$, we are done.
\end{claim}
Following the notations from Claim 3, denote $A=K_V\cap{\rm bd}\overline U$, $P=K_V\cap\overline U$ and $P_i=K_V\cap\overline U\cap\overline G_i$, $i=1,2$. Then
$P_1\cup P_2=P$, $A\cap P_1\cap P_2=C'\cap K_V\cap{\rm bd}\overline U$. According to Claim 3, $\gamma_U\in H^{n-1}({\rm bd}\overline U;G)$ is not extendable over $K_U={\rm bd}\overline U\cup (K_V\cap\overline U)$. This means that the element 
$\mu_U=j_{{\rm bd}\overline U,K_V\cap{\rm bd}\overline U}(\gamma_U)\in H^{n-1}(A;G)$ is not extendable over $P$. On the other hand, $\gamma_U$ is extendable over each 
$K_{U,i}={\rm bd}\overline U\cup (K_V\cap\overline U\cap\overline G_i)$, $i=1,2$. Consequently, $\mu_U$ is extendable over each $K_V\cap\overline U\cap\overline G_i$. 
To complete the proof of Claim 4, we can apply Proposition 2.3 as we did in Claim 2. This completes the proof of Claim 4.

Therefore, we can suppose everywhere below that there are two neighborhoods $U,V$ of $b$ satisfying the conditions of Claim 3 with $H^{n-2}(C'\cap M\cap\rm{bd}\overline V;G)\neq 0$ and $H^{n-2}(C'\cap K_V\cap\rm{bd}\overline U;G)\neq 0$. In particular, both $C\cap\rm{bd}\overline V$ and 
$C\cap\rm{bd}\overline U$ are nonempty.

\begin{claim}
Let $V,U$ be neighborhoods of $b$ satisfying the conditions from Claim 3 with $C\cap\rm{bd}\overline V\neq\varnothing\neq C\cap\rm{bd}\overline U$.
Then there exists a partition $\Pi$ in $M(V,U)=\overline V\backslash U$ between $\rm{bd}\overline V$ and $\rm{bd}\overline U$ such that $H^{n-2}(\Pi\cap C';G)=0$.
\end{claim}
Consider the set $C\cap M(V,U)$ and its closed disjoint subsets $C\cap\rm{bd}\overline V$ and $C\cap\rm{bd}\overline U$.  
Since $b$ has a special local base $\mathcal B_C^b$ in $C$, there is $W^*\in\mathcal B_C^b$  with $H^{n-2}(\rm{bd}_CW^*;G)=0$
such that
$\rm{bd}_CW^*$ separates $C\cap M(V,U)$ between $C\cap\rm{bd}\overline U$ and $C\cap\rm{bd}\overline V$. 
By \cite[Corollary 3.1.5]{vm}, there exists a partition $T$ in $M(V,V)$ between $\rm{bd}\overline V$ and $\rm{bd}\overline U$ such that $T\cap C\subset\rm{bd}_CW^*$.
Hence, $\Pi=T\cup\rm{bd}_CW^*$ is a partition in $M(U,V)$ between $\rm{bd}\overline V$ and $\rm{bd}\overline U$ with $\Pi\cap C=\rm{bd}_CW^*$ and $H^{n-2}(\Pi\cap C;G)=0$. Finally, since $\Pi\subset\Gamma$, we have $\Pi\cap C=\Pi\cap C'$. This completes the proof of Claim 5.

Now, we can complete the proof of Theorem 1.3 when $X$ is homogeneous. According to Claims 1-5, the proof is reduced to the assumption that there are two neighborhoods $V,U$ of $b$ and a partition $\Pi$ of $M(V,U)$ between $\rm{bd}\overline V$ and $\rm{bd}\overline U$ such that $H^{n-2}(\Pi\cap C';G)=0$.
Then, by Claim 3, there is $\gamma_\Pi\in H^{n-1}(\Pi;G)$ such that $\gamma_\Pi$ is not extendable over $\Lambda_{{\rm bd}\overline U}\cup(K_V\cap\overline U)$, but it is extendable over each set
    $\Lambda_{{\rm bd}\overline U}\cup(K_V\cap\overline U\cap\overline G_i)$, $i=1,2$. In particular, $\gamma_\Pi$ is extendable over each of the sets
$\big(\Lambda_{{\rm bd}\overline U}\cup(K_V\cap\overline U)\big)\cap\overline G_i$.      
    We can apply Proposition 2.3 to obtain a contradiction. Indeed,
denote $P=\Lambda_{{\rm bd}\overline U}\cup(K_V\cap\overline U)$ and $P_i=P\cap\overline G_i$, $i=1,2$. Clearly $P_1\cup P_2=P$ and
$P_1\cap P_2=\big(\Lambda_{{\rm bd}\overline U}\cup(K_V\cap\overline U)\big)\cap C'$. Since $\Pi\subset\Lambda_{{\rm bd}\overline U}$ and $\Pi\cap\overline U=\varnothing$, $P_1\cap P_2\cap\Pi=\Pi\cap C'$. Finally, the exact sequence 
$$
\begin{CD}
H^{n-2}(C'\cap\Pi;G)@>{}>>H^{n-1}(C',C'\cap\Pi;G)@>{}>> H^{n-1}(C';G),\\
\end{CD}
$$
shows that $H^{n-1}(C',C'\cap\Pi;G)=0$ which contradicts Proposition 2.3. Therefore, the homogeneous case of Theorem 1.3 is established.

Consider now the case when $X$ is strongly locally homogeneous. 
\begin{claim}
The point $b$ has a local base in $X$ consisting of open sets $V$ with $H^{n-2}(C'\cap\rm{bd}\overline V;G)=0$.% and $\dim_GC'\cap\rm{bd}\overline V\leq n-2$. 
\end{claim}
Let $W$ be an arbitrary neighborhood of $b$ with $\overline W\subset\Gamma$.
Since, $b$ has a base $\mathcal B_C^b$ in $C$ consisting of sets $U$ with $H^{n-2}(\rm{bd}_CU;G)=0$, there is $U\in\mathcal B_C^b$ such that 
$\overline U\subset W$. Now, we use the following well-known fact \cite{vd}: If $F$ is a closed subset of a metric space $Z$, then there is a correspondence 
$\rm e:\mathcal T(F)\to\mathcal T(Z)$ between the topologies of $F$ and $Z$ such that 
$$\rm e(\Omega)\cap F=\Omega,{~} \rm e(\Omega_1)\cap \rm e(\Omega_2)=\rm e(\Omega_1\cap\Omega_2){~}\mbox{and}{~}  \rm e(\varnothing)=\varnothing.$$
%\rm e(\Omega_1)\subset\rm e(\Omega_2){~}\mbox{provided}{~} \Omega_1\subset\Omega_2 
 Such a correspondence is called a {\em $K_0$-function}. It easily seen that 
$\overline{\rm e(\Omega)}\cap F=\overline\Omega$ 
for every open $\Omega\subset F$. %We call such a correspondence a regular 
Now, we consider a $K_0$-function $\rm e:\mathcal T(C'\cap\overline W)\to\mathcal T(\overline W)$ and define $\rm e':\mathcal T(C'\cap W)\to\mathcal T(W)$ by $\rm e'(\Omega)=\rm e(\Omega)\cap W$. Clearly, $\rm e'$ is also a $K_0$-function, and
let $V=\rm e'(U)$. Then $b\in V$ and, according to the mentioned above properties of $K_0$-functions, we have 
$$\overline V\cap C'\subset\overline{\rm e(U)}\cap C'=\overline{\rm e(U)}\cap\overline W\cap C'=\overline U.$$
Since $U\subset\overline V\cap C'$, we obtain  $\overline V\cap C'=\overline U$. Similarly, $V\cap C'=U$.
Moreover, $U\cap\rm{bd}\,\overline{V}=\varnothing$ because 
$U\subset V$. So, $C'\cap\rm{bd}\,\overline{V}\subset\rm{bd}_{C'}U$. On the other hand, $V\cap C'=U$ implies that $\rm{bd}_{C'}U\subset\overline V\backslash V$. Therefore, 
$C'\cap\rm{bd}\,\overline{V}=\rm{bd}_{C'}U$. Clearly, $\rm{bd}_{C'}U=\rm{bd}_{C}U$. So, $H^{n-2}(C'\cap\rm{bd}\overline V;G)=0$.
This completes the proof of Claim 6.

%Finally, because $C$ is finite-dimensional and $\dim_GC\leq\dim_GC\leq n-1$, by \cite{dk} and \cite{dk1}, we can assume that $\dim_G\rm{bd}_{C}U\leq n-2$. The last inequality, together with $H^{n-2}(\rm{bd}_CU;G)=0$ implies 
%$H^{n-2}(C\cap\rm{bd}V;G)=0$ and $\dim_GC\cap\rm{bd}\overline V\leq n-2$. 

Let $W$ be as Lemma 2.2 and take another two neighborhoods $V,U$ of $b$ such that $\overline U\subset V\subset\overline V\subset W$, $H^{n-2}(C'\cap\rm{bd}\overline V;G)=0$ and for every two points $x,y\in U$ there is a homeomorphism $h$ on $X$ with $h(x)=y$ and $h$ is supported by $U$. According to the proof of Lemma 2.2, the element $\gamma\in H^{n-1}(F;G)$ is extendable to
$\gamma'\in H^{n-1}(M\backslash U;G)$. Let $\gamma_V=j_{M\backslash U,M\cap\rm{bd}\overline V}(\gamma')\in H^{n-1}(M\cap\rm{bd}\overline V;G)$
and $\gamma_U=j_{M\backslash U,M\cap\rm{bd}\overline U}(\gamma')\in H^{n-1}(M\cap\rm{bd}\overline{U};G)$. Then $\gamma_V$ is
not extendable over $M\cap\overline V$ (otherwise $\gamma$ would be extendable over $M$). By the same reason, $\gamma_U$ is not extendable over $M\cap\overline U$. Moreover, $\gamma_V=j_{M\cap(\overline V\backslash U),M\cap\rm{bd}\overline V}(\gamma'')$, where 
$\gamma''=j_{M\backslash U,M\cap(\overline V\backslash U)}(\gamma')$. Hence, $\gamma_V$ is extendable over $M\cap(\overline V\backslash U)$.

Let $A=(C'\cap\rm{bd}\overline V)\cup(M\cap\rm{bd}\overline V)$. Since $\dim_GC'\cap\rm{bd}\overline V\leq\dim_GC'\leq n-1$, there exists $\gamma_A\in H^{n-1}(A;G)$ extending $\gamma_V$. Observe that $A\cap C'=C'\cap\rm{bd}\overline V$ and $A\cap M=M\cap\rm{bd}\overline V$. Since $\gamma_V$ is  extendable
over $M\cap(\overline V\backslash U)$, so is $\gamma_A$ over $(M\cap(\overline V\backslash U))\cup(C'\cap\rm{bd}\overline V)$. On the other hand,
 $\gamma_V$ being not extendable over $M\cap\overline V$ implies
$\gamma_A$ is not extendable over 
$(C'\cap\rm{bd}\overline V)\cup(M\cap\overline V).$
Hence, there is an $(n-1)$-cohomology membrane $K_A\subset(M\cap\overline V)\cup(C'\cap\rm{bd}\overline V)$ for $\gamma_A$ spanned on $A$. 
Since
$\gamma_V$ is extendable over $M\cap(\overline V\backslash U)$, $\gamma_A$ is extendable over $\big(M\cap(\overline V\backslash U)\big)\cup(C'\cap\rm{bd}\overline V)$. Therefore, 
$K_A$ meets $M\cap U$. 
We can suppose that $b\in K_A$. Indeed, if $b\not\in K_A$ take a point $y\in K_A\cap U$ and a homeomorphism $h$ on $X$ supported by $U$ with $h(y)=b$. 
Then $h(K_A)$ is an $(n-1)$-cohomological membrane for $\gamma_A$ spanned on $A$ and contains $b$. 

We can suppose that $\dim_G\rm{bd}\overline U\leq n-1$. Since $K_A\cap\overline U=(K_A\cap\rm{bd}\overline U)\cup(K_A\cap U)$ and $K_A\cap U$ is an $F_\sigma$-set, the assumption $\dim_G K_A\cap U\leq n-1$ would imply that $\dim_GK_A\cap\overline U\leq n-1$ (by the countable sum theorem for $\dim_G$).
Then $\gamma_U$ would be extendable over $K_A\cap\overline U$. Hence, because
$\gamma_A$ is extendable over $(M\cap(\overline V\backslash U))\cup(C'\cap\rm{bd}\overline V)$, we could extend  
$\gamma_A$ over $K_A$. 
Therefore, $\dim_G K_A\cap U=n$. Consequently,  
$K_A\cap U$ meets at least one $G_i$, $i=1,2$. Suppose there is a point $x\in K_A\cap U\cap G_1$. Then $b\neq x$ because $b\not\in G_1$. So, there exists a neighborhood $U'$ of $b$ such that $x\not\in\overline U'$, $\overline U'\subset U$ and 
for every two points $x',y'\in U'$ there is a homeomorphism $h'$ on $X$ supported by $U'$ with $h'(x')=y'$. Since
$U'\cap G_2\neq\varnothing$, we can push $b$ by a homeomorphism $\varphi$ on $X$ supported by $U'$ such that $\varphi(b)\in U'\cap G_2$.
Then $\varphi(K_A)$ is an $(n-1)$-cohomology membrane for $\gamma_A$ spanned on $A$ meeting both $G_i$, $i=1,2$. Hence, $\gamma_A$ is not extendable over $\varphi(K_A)$, but it is extendable over each $\varphi(K_A)\cap\overline G_i$. Finally, let $P=\varphi(K_A)$ and $P_i=\varphi(K_A)\cap\overline G_i$, $i=1,2$. Since $P_1\cup P_2=P$ and $A\cap P_1\cap P_2=A\cap C'$, we can apply Proposition 2.3 with $\gamma=\gamma_A$ to obtain that 
$H^{n-1}(C',A\cap C')\neq 0$. 
Finally, since $H^{n-2}(A\cap C';G)=0$ (recall that $A\cap C'=C'\cap\rm{bd}\overline V$), the exact sequence from the proof of Claim 2 implies $H^{n-1}(C',A\cap C')=0$, a contradiction. This completes the proof of Theorem 1.3.

\textbf{Acknowledgments.} The author would like to express his gratitude to the referee for his/her valuable comments and suggestions.  The author also thanks 
A. Dranishnikov and A. Karassev for their helpful discussions.

\end{document}